\newtheorem{thm}{Theorem}[section]
\newtheorem{cor}[thm]{Corollary}
\newtheorem{prob}[thm]{Problem}
\newtheorem{claim}{Claim}
\newtheorem{fact}{Fact}
\begin{document}
\title{The score sequences with unique tournament that has minimum number of upsets
\thanks{The work was supported by NNSF of China (No. 11671376) and Anhui Initiative in Quantum Information Technologies (AHY150200).}}
\author{Yuming Zhang$^a$,\quad Xinmin Hou$^b$\\
\small $^{a,b}$ Key Laboratory of Wu Wen-Tsun Mathematics\\
\small School of Mathematical Sciences\\
\small University of Science and Technology of China\\
\small Hefei, Anhui 230026, China.\\
}
\date{}
\maketitle

\begin{abstract}
Let $T$ be a tournament with nondecreasing score sequence $R$ and $A$ be its tournament matrix.
An upset of $T$ corresponds to an entry above the main diagonal of $A$. Given a feasible score sequence $R$, Fulkerson~(1965) gave a simple recursive construction for a tournament with score sequence $R$ and the minimum number of upsets,  and Hacioglu et al. (2019) provided a construction for all of such tournament matrices.  Let $U_{\min}(R)$ denote the set of tournament matrices with score sequence $R$ that have minimum number of upsets. Brauldi and Li~(1983) characterized the strong score sequences $R$ ($R$ is strong if a tournament $T$ with score sequence $R$ is strongly connected) with $|U_{\min}(R)|=1$. In this article, we characterize all feasible score sequences $R$ with $|U_{\min}(R)|=1$ and give an explicit formula for the number of the feasible score sequences $R$ with $|U_{\min}(R)|=1$.

\par\textbf{Keywords: }tournament;upsets;score sequences
\end{abstract}

\section{Introduction}
A {\em tournament} is an orientation graph of a complete graph.
The {\em score-list} of a tournament is the sequence of the outdegrees of its vertices.
A {\em tournament matrix } is  the adjacency matrix, $A=\left(a_{ij}\right)$, of a tournament. Then $A$ is a (0,1)-matrix satisfying that $A+A^T=J-I$, where $J$ is the all 1's matrix and $I$ is the identity matrix.
So the row sum vector of $A$ is the score-list of the tournament, also called the {\em score sequence} of $A$.
Throughout the paper, we identify $(0,1)$-matrices and digraphs if no confusion from the context.

Let $\mathcal{T}(R)$ be the set of all $n$-by-$n$ tournament matrices with row sum vector $R=(r_1, r_2,\ldots, r_n)$. For a fixed nondecreasing integral vector $R=\left(r_1,r_2,\dots,r_n\right)$ with
\begin{equation}\label{EQ: e1}
\sum\limits_{i=1}^nr_i=\binom{n}{2},
\end{equation} the Landau inequalities~\cite{Landau53}
\begin{equation*}
\sum_{i=1}^kr_i\ge\binom{k}{2},\ \mbox{ for } k=1,2,\ldots,n-1 \label{Landau}
\end{equation*}
provide sufficient and necessary conditions for the existence of a tournament matrix in $\mathcal{T}(R)$.
We call a nondecreasing integral vector $R=(r_1, r_2, \ldots, r_n)$ a {\em feasible score sequence} if $R$ satisfying (\ref{EQ: e1}) and the Landau inequalities.
One of the fundamental problems in the study of the tournaments was generating all the tournament matrices in $\mathcal{T}(R)$, which has been solved
independently by Kannan et al.~\cite{KPV-RSA99} and McShine~\cite{McShine-EJC00}.

Let $T$ be a tournament with feasible score-list $R$ and $A=(a_{ij})$ be its tournament matrix.
An {\em upset} of $T$ corresponds to an entry $a_{ij}=1$ with $i<j$ in $A$. In fact, regard $n$ vertices of the tournament $T$ as $n$ teams, note that the score sequence is nondecreasing,  an upset is a game that the team $j$ lost to the team $i$, but $j>i$. So for the team with the higher score it is an upset game.

For a feasible  score sequence $R=(r_1,r_2,\ldots,r_n)$, let $H_R=(h_1,h_2,\ldots,h_n)$, where $h_i=r_i-(i-1)$ for $i=1,2,\ldots,n$. $H_R$ is called the {\em normalized score vector} of $R$.
Define multisets $X=\{h_i\cdot i : h_i>0 \}$ and $Y=\{ (-h_j)\cdot j : h_j<0\}$, where $h_i\cdot i$ stands for $h_i$ $i$'s.
Let $X'=\{i : h_i>0 \}$ and $Y'=\{j : h_j<0\}$. Then, by (\ref{EQ: e1}), we have $\sum\limits_{i\in X'}h_i=-\sum\limits_{j\in Y'}h_j$. Obviously the minimum number of upsets is at least $\sum\limits_{i\in X'}h_i$. In fact, Ryser~\cite{Ryser64} proved that the lower bound is actually the minimum number of upsets. Fulkerson~\cite{Fulkerson65} provided an algorithm for the construction of a tournament with the minimum number of upsets. Denote $\ell=\sum\limits_{i\in X'}h_i$.  A {\em feasible} $\ell$-tuple of $X\times Y$ is $\ell$ distinct ordered pairs $(i,j)\in X\times Y$ with $i<j$, where $i\in X$ occurs $h_i$ times and $j\in Y$ occurs $-h_j$ times in the ordered pairs.
Let $P_R$ be the set of all feasible $\ell$-tuples of $X\times Y$ and let $U_{\min}(R)$ denote the set of  tournament matrices  with score sequence $R$ that have minimum number of upsets.
In~\cite{HSKF19}, Hacioglu et al. showed that

\begin{thm}[Theorem 2.1 in~\cite{HSKF19}]\label{thm1}
Let $R=(r_1,r_2,\dots,r_n)$ be a feasible score sequence. Then $|U_{\min}(R)|=|P_R|$ and the ordered pairs in each
feasible $\ell$-tuple give us the location of the upsets..
\end{thm}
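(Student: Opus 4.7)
The plan is to construct an explicit bijection $\Phi \colon U_{\min}(R) \to P_R$ that sends a tournament matrix $A$ to the set of its upset positions $\Phi(A) = \{(i,j) : i<j,\ a_{ij}=1\}$. The statement then reduces to showing $\Phi(A) \in P_R$ whenever $A$ achieves the minimum, exhibiting an inverse, and noting that the upper triangle of $A$ determines $A$ via $A + A^T = J - I$, which gives injectivity of $\Phi$ for free.

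First I would record the identity that governs everything. For any $A \in \mathcal{T}(R)$, let $u_i$ be the number of $j > i$ with $a_{ij} = 1$ (the upsets in row $i$) and $\ell_i$ the number of $j < i$ with $a_{ij} = 0$ (the losses below the diagonal). Counting ones in row $i$ gives $r_i = (i-1) - \ell_i + u_i$, hence $u_i - \ell_i = h_i$. Since $u_i, \ell_i \geq 0$, this forces $u_i \geq \max(h_i, 0)$, and summing yields $\sum_i u_i \geq \sum_{i \in X'} h_i = \ell$, with equality if and only if $u_i = \max(h_i, 0)$ and $\ell_i = \max(-h_i, 0)$ for every $i$ (recovering Ryser's lower bound). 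Applied to $A \in U_{\min}(R)$, this forces $u_i = 0$ for $i \notin X'$ and $\ell_j = 0$ for $j \notin Y'$, so every upset $(i,j)$ satisfies $i \in X'$ and $j \in Y'$. Plugging in the multiplicities $u_i = h_i$ for $i \in X'$ and $\ell_j = -h_j$ for $j \in Y'$ shows that $\Phi(A)$ is a feasible $\ell$-tuple.

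For the inverse direction, given $\tau \in P_R$ I would define $A_\tau = (a_{ij})$ by setting $a_{ij} = 1$ iff $(i,j) \in \tau$ for $i < j$, and $a_{ji} = 1 - a_{ij}$ otherwise; this is automatically a tournament matrix. A short row-by-row check, split into the three cases $i \in X'$, $i \in Y'$, and $h_i = 0$ (using the disjointness $X' \cap Y' = \emptyset$ and the multiplicity constraints defining $P_R$), shows $A_\tau$ has row sums $r_i$ and exactly $\ell$ upsets, so $A_\tau \in U_{\min}(R)$ and $\Phi(A_\tau) = \tau$. The only place where real work happens is the implication "minimum $\sum u_i$ forces every upset to lie in $X' \times Y'$ with the prescribed multiplicities"; once this is isolated via the identity $u_i - \ell_i = h_i$, the rest is bookkeeping with the definition of a feasible $\ell$-tuple.
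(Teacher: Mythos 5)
The paper does not actually prove this statement: it is quoted as Theorem 2.1 of~\cite{HSKF19} and used as a black box, so there is no in-paper argument to compare yours against. Judged on its own, your bijection $\Phi(A)=\{(i,j): i<j,\ a_{ij}=1\}$ is the natural proof, and the bookkeeping is right: the identity $u_i-\ell_i=h_i$, the forced multiplicities at equality, injectivity from $A+A^T=J-I$, and the row-sum verification for $A_\tau$ all go through.

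One step deserves more care. When you write ``applied to $A\in U_{\min}(R)$, this forces $u_i=\max(h_i,0)$,'' you are assuming that a matrix in $U_{\min}(R)$ attains the lower bound $\ell$, i.e.\ that the minimum number of upsets equals $\ell$ rather than merely being at least $\ell$. Your first paragraph proves only the inequality. Attainment is equivalent to $P_R\neq\emptyset$: your construction $A_\tau$ shows the minimum is at most $\ell$ \emph{once} some feasible $\ell$-tuple $\tau$ is known to exist, but if $P_R$ were empty then $\Phi$ would not land in $P_R$ at all, while $U_{\min}(R)\neq\emptyset$ always holds since $\mathcal{T}(R)\neq\emptyset$ by Landau's theorem. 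So you must either invoke the fact that the bound is attained --- the paper states this in the introduction, crediting Ryser~\cite{Ryser64} and Fulkerson's construction~\cite{Fulkerson65} --- or prove directly that the bipartite degree sequence $(h_i)_{i\in X'}$, $(-h_j)_{j\in Y'}$ subject to the constraint $i<j$ is realizable by distinct pairs. With that one citation made explicit, your argument is complete.
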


Moreover, Hacioglu et al. also showed that

\begin{thm}[Theorem 5.1 in~\cite{HSKF19}]\label{THM: Regular}
Let $R_0=(\lfloor\frac{n-1}{2}\rfloor,\ldots, \lfloor\frac{n-1}{2}\rfloor,\lceil\frac{n-1}2\rceil, \ldots,\lceil\frac{n-1}{2}\rceil)$, where $\lfloor\frac{n-1}{2}\rfloor$ and $\lceil\frac{n-1}2\rceil$ each occurs $\frac n2$ times when $n$ is even (called regular when $n$ is odd and near-regular when $n$ is even) score sequences in~\cite{HSKF19}, respectively).
Then $|U_{\min}(R_0)|=1$.
\end{thm}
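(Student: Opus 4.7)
By Theorem~\ref{thm1}, it suffices to show $|P_{R_0}|=1$, i.e., that there is a unique feasible $\ell$-tuple of $X\times Y$ for $R_0$. The plan is to write down the sets $X$, $Y$, $X'$, $Y'$ explicitly from the definition of $R_0$, observe that the constraint $i<j$ is automatically satisfied for every pair $(i,j)\in X'\times Y'$, and then peel off the forced edges one by one via a short induction.

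Concretely, I would split on the parity of $n$. For $n=2k+1$, every $r_i=k$ gives the normalized vector $h_i=k-i+1$, so $X'=\{1,\dots,k\}$ with multiplicities $h_i=k,k-1,\dots,1$ and $Y'=\{k+2,\dots,2k+1\}$ with multiplicities $-h_j=1,2,\dots,k$; for $n=2k$, one similarly obtains $X'=\{1,\dots,k-1\}$ and $Y'=\{k+2,\dots,2k\}$ with multiplicities $k-1,k-2,\dots,1$ and $1,2,\dots,k-1$ respectively. In either case, $\max X'<\min Y'$, so the requirement $i<j$ imposes no extra restriction, and a feasible $\ell$-tuple is just a simple bipartite graph on $X'\cup Y'$ realizing the prescribed degree sequence (each pair $(i,j)$ appears at most once, with vertex $i$ of $X'$ of degree $h_i$ and vertex $j$ of $Y'$ of degree $-h_j$).

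For the uniqueness, I would argue by induction on $|X'|$. The key observation is that the largest element of $X'$ (equivalently, the smallest index $i=1$, which carries multiplicity equal to $|Y'|$) must be paired with \emph{every} element of $Y'$, because its required ``degree'' equals $|Y'|$ and the pairs are distinct. Symmetrically, the largest index in $Y'$ is forced to be paired with every element of $X'$. After removing those forced pairs, the residual problem has $X'$ and $Y'$ each shrunk by one element, and the residual multiplicities are exactly those of $R_0$ with smaller parameter; in particular the smallest element of the new $X'$ has residual multiplicity $0$ and can be discarded, leaving a structure identical in form. Iterating, at every step there is a unique choice, so $|P_{R_0}|=1$.

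I expect no real obstacle beyond keeping the index bookkeeping straight in the two parity cases; the induction works uniformly and the constraint $i<j$ is never active. The only subtlety worth flagging explicitly is that ``distinct'' in the definition of a feasible $\ell$-tuple is what makes the greedy/forced step at each stage valid (if repetitions were allowed, many tuples would exist).
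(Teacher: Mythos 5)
Your proof is correct and follows essentially the same forced-peeling argument that the paper uses for the sufficiency direction of Theorem~\ref{THM: main1}, of which this theorem is the single-segment special case (the paper peels from the most negative entry inward, while you peel both extremes simultaneously --- the same idea). One small bookkeeping slip: after the first peeling step it is the \emph{largest} index of the new $X'$ (and the smallest index of the new $Y'$) whose residual multiplicity drops to $0$, not the smallest element of the new $X'$, but this does not affect the validity of the induction.
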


In fact, the above theorem can be viewed as a corollary of a result given by Brauldi and Li~\cite{Brualdi-Li83}.
A feasible score sequence $R$ is strong if a tournament $T$ with score sequence $R$ is strongly connected.
\begin{thm}[Theorem 2.7 in~\cite{Brualdi-Li83}]\label{THM: Brau-Li83}
 Let $R=(r_1, r_2, \ldots, r_n)$ be a strong score vector. Then $|U_{\min}(R)|=1$ if and
only if $$R=(\underbrace{k,\ldots, k}_k, k, k + 1 ,\ldots, n-k-1, \underbrace{n-k-1,\ldots,n-k-1}_k)$$
for some integer $k>1$ satisfying $2k + 1<n$.
\end{thm}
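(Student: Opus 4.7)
By Theorem~\ref{thm1} the task reduces to characterizing strong feasible $R$ with $|P_R|=1$. A feasible $\ell$-tuple corresponds to a simple bipartite graph on $X' \cup Y'$ in which each $i \in X'$ has degree $h_i$ and each $j \in Y'$ has degree $-h_j$, subject to $i<j$ on every edge. Uniqueness is governed by the standard switching criterion: $|P_R|\geq 2$ whenever there exist indices $i_a<i_b<j_c<j_d$ with $(i_a,j_c),(i_b,j_d)$ edges and $(i_a,j_d),(i_b,j_c)$ non-edges. The entire proof is driven by blocking such switches, using the monotonicity $h_{i+1}\geq h_i-1$ inherited from $R$ being nondecreasing.

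\textbf{Sufficiency.} For $R$ of the claimed form, direct computation gives
\[
H_R = (k, k-1, \ldots, 1, 0, \ldots, 0, -1, -2, \ldots, -k),
\]
so $X' = \{1,\ldots,k\}$, $Y' = \{n-k+1,\ldots,n\}$, and $2k+1 < n$ ensures $\max X' < \min Y'$. Then $|N(1)|=k=|Y'|$ forces $N(1)=Y'$; vertex $n-k+1$ has degree one and is already saturated by $1$, so it lies outside $N(i)$ for $i\geq 2$; iterating yields the unique staircase $N(i)=\{n-k+i,\ldots,n\}$. Hence $|P_R|=1$, and strongness follows from the explicit positivity of the partial sums of $H_R$.

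\textbf{Necessity.} Assume $R$ is strong with $|P_R|=1$, and set $k=h_1$, $k'=-h_n$. Strongness gives $k,k'\geq 1$; the monotonicity then forces $h_i\geq k-i+1$ for $i\leq k$ and $-h_{n-j+1}\geq k'-j+1$ for $j\leq k'$, so $\{1,\ldots,k\}\subseteq X'$ and $\{n-k'+1,\ldots,n\}\subseteq Y'$. The core claim is that both inclusions are equalities. If some $i^*>k$ satisfied $h_{i^*}>0$, I would pick an edge $(i^*,j_0)$ with $j_0\in Y'$ and a $j_1\in N(1)\subseteq Y'\subseteq\{k+1,\ldots,n\}$ (so $i^*<j_1$), then swap $(1,j_1),(i^*,j_0)\to(1,j_0),(i^*,j_1)$ to produce a second realization; the only obstruction is $N(i^*)\supseteq N(1)\cup\{j_0\}$, which is eliminated by iterating at $i^*$. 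The statement $Y'=\{n-k'+1,\ldots,n\}$ is obtained by applying the same argument to the reverse sequence $R' = (n-1-r_n,\ldots,n-1-r_1)$, which is strong with $|P_{R'}|=1$ and interchanges the roles of $X'$ and $Y'$. Once $X'$ and $Y'$ are identified, the inequalities $h_i\leq 0$ for $i>k$ and $h_j\geq 0$ for $j\leq n-k'$, combined with $h_{i+1}\geq h_i-1$, force $h_i=k-i+1$ on $X'$, $h_j=-(j-n+k')$ on $Y'$, and $h_i=0$ in between. Sum equality $\sum h_i=0$ gives $k=k'$, and $r_i = h_i + i - 1$ recovers the claimed form.

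\textbf{Main obstacle.} The delicate point is the switching argument ruling out $X'\supsetneq\{1,\ldots,k\}$. The direct swap at vertex $1$ is blocked precisely when $N(i^*)\supseteq N(1)\cup\{j_0\}$, a configuration requiring only $h_{i^*}>h_1=k$, which the monotonicity $h_{i+1}\geq h_i-1$ does permit. Resolving it requires iterating the swap---each iteration pushes the problem degree higher until it is unsustainable relative to $|Y'|$---or coordinating with the symmetric swap at the vertex-$n$ end. Setting up the case analysis so that some swap is always applicable is the technical crux of the proof.
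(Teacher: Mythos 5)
This theorem is not proved in the paper at all: it is quoted from Brualdi--Li as motivation, and the only in-paper argument that bears on it is the proof of the generalization, Theorem~\ref{THM: main1}. Measured against that proof, your sufficiency direction is fine and is essentially the paper's own forcing argument (you saturate from the degree-$k$ vertex $1$ downward; the paper saturates from the degree-$p$ sink $j_p$ upward --- the same staircase). The problem is the necessity direction, where you have named the crux but not crossed it. Two concrete issues. First, your basic swap at vertex $1$ is not only blocked by the containment $N(i^*)\supseteq N(1)\cup\{j_0\}$: the parenthetical ``so $i^*<j_1$'' is unjustified, because at that stage of the argument you cannot exclude that the extra positive index $i^*$ lies to the right of some elements of $Y'$ (normalized vectors such as $(2,1,0,-1,-2,1,0,-1)$ show such interleaving is consistent with Fact~\ref{FACT: Landau} and with uniqueness), and then the exchanged pair $(i^*,j_1)$ violates the constraint $i<j$. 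Second, ``eliminated by iterating at $i^*$'' is not an argument; you give no invariant that guarantees the iteration terminates in an applicable swap. The paper's proof of Theorem~\ref{THM: main1} closes exactly this gap differently: in Claim~\ref{CLAIM:c1} it pivots not at vertex $1$ but at the \emph{last} positive entry $i_0$ of the first maximal segment, which has $h_{i_0}=1$ and hence a unique partner, so the obstruction reduces to locating where that single partner sits (inside the first nonpositive string, where a counting argument on $-h_{j''}\le q$ frees a swap, or beyond it, where the next positive string supplies a fresh degree-one pivot). A separate Landau-counting argument (Claim~\ref{CLAIM:c2}) then pins the positive entries to $q,q-1,\ldots,1$. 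You need some version of this degree-one pivot, or another mechanism with a termination proof, before the necessity direction stands.

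A secondary remark on the target statement itself: even if completed, your necessity argument yields the form with $k=h_1\ge 1$ and $n\ge 2k+1$, not the quoted side conditions $k>1$ and $2k+1<n$. That mismatch is not your error --- the regular sequence of Theorem~\ref{THM: Regular} is strong with $|U_{\min}(R)|=1$ and has $2k+1=n$, and $(1,1,2,2)$ is a strong example with $k=1$ --- but you should flag it rather than silently prove a statement with different quantifiers than the one asserted.
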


This motivates us to consider the following problems.
\begin{prob}\label{PROB: p1}
(I) Characterize all the feasible score sequences $R$ with $|U_{\min}(R)|=1$.

(II) How many feasible score sequences $R$ with the property that $|U_{\min}(R)|=1$.
\end{prob}

Note that  the normalized score vector of the strong score sequences is
$$H_R=(k, k-1, \ldots, 1, 0, \ldots, 0, {-1, -2,\ldots, -k}).$$
In general, we call  a vector $H$ {\it symmetric} if $H$ has the form $(0,\ldots,0,)$ or $(p,p-1,\ldots,1,0,\ldots,0,-1,-2,\ldots,-p)$ for some positive integer $p$. The following result generalizes Theorem~\ref{THM: Regular} and solves problem (I).

\begin{thm}\label{THM: main1}
Let $R=(r_1, r_2, \ldots, r_n)$ be a feasible score sequence. Then $|U_{\min}(R)|=1$
if and only if the normalized score vector of $R$ has the form
	\begin{equation}
	H_R=(H_1,H_2,\dots,H_m) \notag
	\end{equation}
	where every segment $H_i$ is a symmetric vector for some positive integer $p_i$, $i=1, 2, \ldots, m$.
\end{thm}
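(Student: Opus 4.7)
By Theorem~\ref{thm1}, $|U_{\min}(R)|=|P_R|$, so the task is to characterize when $|P_R|=1$. The plan has two stages.

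\emph{Stage~1: hump decomposition.} Set $S_t=\sum_{i=1}^{t}h_i$; the Landau inequalities together with $\sum h_i=0$ give $S_t\ge 0$ with $S_0=S_n=0$. The zero set $Z=\{t:S_t=0\}$ partitions $\{1,\dots,n\}$ into maximal blocks, each either an all-zero run (a trivial symmetric vector) or a \emph{hump}---a block whose first entry is positive, whose last entry is negative, and whose interior prefix sums are strictly positive. For each $t\in Z$,
\[
\sum_{j\le t,\ h_j<0}(-h_j)=\sum_{i\le t,\ h_i>0}h_i,
\]
because their difference equals $S_t=0$. The forward rule $i<j$ sends every pair with second coordinate $\le t$ to one with first coordinate $\le t$, so the left-hand side counts pairs with second coordinate in $[1,t]$ (each consuming one first-coordinate slot), while the right-hand side counts the first-coordinate slots available in $[1,t]$. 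Hence every such slot is used by a pair entirely inside $[1,t]$, and no pair of any feasible $\ell$-tuple crosses a zero of $S$. Applying this at every $t\in Z$ yields $|P_R|=\prod_H|P_H|$ over humps $H$, reducing the theorem to the single-hump statement: \emph{a hump $H$ satisfies $|P_H|=1$ iff $H=(p,p-1,\dots,1,0,\dots,0,-1,\dots,-p)$ for some $p\ge1$.}

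\emph{Stage~2, sufficiency.} If $H$ is symmetric with parameter $p$, its $p$ positive positions (multiplicities $p,p-1,\dots,1$) precede its $p$ negative positions (multiplicities $1,2,\dots,p$), so the forward constraint is automatic and $|P_H|$ equals the number of $p\times p$ $(0,1)$-matrices with those row and column sums. A short induction on $p$---row~$1$ has sum equal to the number of columns and is forced to be all ones; column~$1$ is then forced to have its unique $1$ in row~$1$; deletion reduces to parameter $p-1$---identifies this matrix uniquely as the upper-triangular all-ones matrix, so $|P_H|=1$.

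\emph{Stage~2, necessity.} Assume $|P_H|=1$; I aim to show $H$ has the symmetric form by induction on the length of $H$, exhibiting a $2$-switch whenever $H$ is non-symmetric. Writing positives of $H$ as $i_1<\dots<i_q$ with multiplicities $d_s=h_{i_s}$ and negatives as $j_1<\dots<j_r$ with multiplicities $e_t=-h_{j_t}$, non-symmetry falls into three types: (a) \emph{interleaving} ($j_1<i_q$); (b) non-interleaving but $(d_1,\dots,d_q)\neq(p,p-1,\dots,1)$; or (c) non-interleaving but $(e_1,\dots,e_r)\neq(1,2,\dots,p)$. In the non-interleaved case the forward rule is vacuous inside $H$, so feasible tuples correspond to bipartite $(0,1)$-matrices of the prescribed row/column sums; the classical characterisation that such a matrix is uniquely realised iff its sorted row and column sums form conjugate staircases (equivalently, iff the underlying bipartite graph is a chain graph) eliminates (b) and (c), and the step-size rule $h_{i+1}\ge h_i-1$ pins down the positions to exactly the symmetric form. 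The interleaved case~(a) is the main obstacle, because the forward rule is nontrivial and the bipartite-uniqueness classification does not apply directly. My plan is to locate, inside the forced tuple, pairs $(i',j'),(i'',j'')$ with $i'<i''<j''<j'$ for which the swapped pairs $(i',j''),(i'',j')$ are not already present; such ``crossing'' configurations can be manufactured whenever interleaving occurs, by taking $j^\star$ to be the smallest negative position of $H$ that has some positive position $i^\star$ of $H$ to its right, and exploiting the multiplicities at $j^\star$ and $i^\star$ (both at least $1$, so each has a partner to spare for the exchange). Verifying that the swapped four pairs are all distinct and forward-valid---a delicate bookkeeping that uses the prefix-sum positivity of the hump's interior together with the step-size rule---is the technical heart of the argument.
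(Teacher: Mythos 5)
Your Stage~1 prefix-sum decomposition is correct and is a genuinely cleaner organizing device than the paper uses: the observation that $S_t=0$ forces every pair of a feasible $\ell$-tuple to live entirely inside one hump gives $|P_R|=\prod_H|P_H|$ and reduces everything to a single-hump statement, whereas the paper works through its maximal segments one at a time and argues independence only after resolving each segment. Your sufficiency argument for a symmetric hump (row~$1$ forced to be all ones, column~$1$ forced, induct) is complete and is essentially the paper's forced-pair chain in matrix language.

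The necessity direction, however, has a genuine gap, and it sits exactly where the real work of the theorem lies. In your case~(a) (an interleaved hump, i.e.\ some negative position preceding some positive position), you do not give an argument: you state a \emph{plan} to ``locate'' a crossing configuration $(i',j'),(i'',j'')$ with $i'<i''<j''<j'$ whose swap produces a second tuple, and you yourself flag the verification that the swapped pairs are distinct, forward-valid, and not already present in $M$ as ``the technical heart of the argument.'' That verification is not routine: the swap can fail because $(i',j'')$ or $(i'',j')$ already lies in $M$, or because the only candidate positive positions to the right of $j^\star$ have all their multiplicity already committed to far-away negatives, and ruling this out needs specific structural choices. This is precisely what the paper's Claims~1 and~2 supply: the paper pins down the \emph{last} positive entry of value $1$ before the first negative (which exists by the step rule $h_{i+1}\ge h_i-1$), the \emph{first} occurrence of the minimum value $-q$, and, in the troublesome subcase where a positive index is matched to a negative position beyond the current segment, a value-$1$ positive entry strictly between them; only with these particular choices do the two exchanged pairs demonstrably avoid collisions with $M$. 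Without carrying out this bookkeeping your proof does not establish necessity. Separately, your cases (b) and (c) lean on the Gale--Ryser uniqueness criterion plus the assertion that ``the step-size rule pins down the positions''; this is fillable (one can show the column sums must be exactly $\{1,\dots,q\}$ and that the step rule then forces the staircase ordering with no interspersed zeros), but as written it is also only a sketch, in contrast to the paper's Claim~2, which derives the distinctness of $h_{i_1},\dots,h_{i_q}$ directly from a violation of the Landau partial-sum inequality.
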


The following theorem compute the number of the feasible score sequences $R$ with the property that $|U_{\min}(R)|=1$ and so answers problem (II).
\begin{thm}\label{THM: main2}
The number of the feasible score sequences $R$ of length $n$ with the property that $|U_{\min}(R)|=1$ is
\begin{multline*}
c_1\left(\frac{1-\sqrt{2\sqrt{5}+3}}2\right)^{n-1}+c_2\left(\frac{1+\sqrt{2\sqrt{5}+3}}2\right)^{n-1} \\
+c_3\left(\frac{1-i\sqrt{2\sqrt{5}-3}}2\right)^{n-1}+c_4\left(\frac{1+i\sqrt{2\sqrt{5}-3}}2\right)^{n-1},
 \end{multline*}
where
\begin{align*}
  c_1=\frac{\sqrt{5}+1}4\left(\frac{1}{\sqrt{5}}-\frac{1}{\sqrt{2\sqrt{5}+3}}\right), & \, c_2=\frac{\sqrt{5}+1}4\left(\frac{1}{\sqrt{5}}+\frac{1}{\sqrt{2\sqrt{5}+3}}\right), \\
   c_3=\frac{\sqrt{5}-1}4\left(\frac{1}{\sqrt{5}}-\frac{i}{\sqrt{2\sqrt{5}-3}}\right), & \, c_4=\frac{\sqrt{5}-1}4\left(\frac{1}{\sqrt{5}}+\frac{i}{\sqrt{2\sqrt{5}-3}}\right).
\end{align*}

\end{thm}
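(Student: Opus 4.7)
My plan is to translate Theorem~\ref{THM: main2} into a generating-function count and extract the closed form from the characteristic roots. By Theorem~\ref{THM: main1}, I need to count concatenations $H_R=(H_1,\ldots,H_m)$ of symmetric blocks for which the corresponding $R$ is genuinely a nondecreasing sequence, i.e.\ $h_{i+1}\ge h_i-1$ throughout. A case check of transitions inside a block and across block boundaries shows that this holds if and only if every non-trivial block $(p,p-1,\ldots,1,0^k,-1,\ldots,-p)$ has at least one middle zero, i.e.\ $p\ge 1$ and $k\ge 1$; otherwise the jump from $1$ to $-1$ would be too steep. Hence the vectors to be counted are exactly those of the form $(0^{a_0})\,B_1\,(0^{a_1})\cdots B_t\,(0^{a_t})$ with $t\ge 0$, $a_i\ge 0$, and each $B_j$ such a non-trivial block.

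With non-trivial blocks contributing $B(x)=\sum_{p,k\ge 1}x^{2p+k}=x^3/[(1-x)^2(1+x)]$ and zero runs contributing $1/(1-x)$, the uniqueness of this canonical decomposition yields
\begin{equation*}
G(x)=\sum_{n\ge 0}f(n)\,x^n=\frac{1}{1-x-B(x)}=\frac{(1-x)^2(1+x)}{1-2x+x^3-x^4},
\end{equation*}
from which I read off the recurrence $f(n)=2f(n-1)-f(n-3)+f(n-4)$ with characteristic polynomial $P(\lambda)=\lambda^4-2\lambda^3+\lambda-1$. The key trick to factor $P$ is to notice that $P(\lambda)=(\lambda^2-\lambda)^2-(\lambda^2-\lambda)-1$; setting $y=\lambda^2-\lambda$ reduces the equation to $y^2-y-1=0$, so $y=(1\pm\sqrt 5)/2$. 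Back-substituting in $\lambda^2-\lambda=y$ gives $\lambda=(1\pm\sqrt{1+4y})/2$, and the computations $1+4\cdot(1+\sqrt 5)/2=3+2\sqrt 5>0$ and $1+4\cdot(1-\sqrt 5)/2=3-2\sqrt 5<0$ produce two real roots $\lambda_1,\lambda_2$ and two complex conjugate roots $\lambda_3,\lambda_4$, precisely the four numbers named in Theorem~\ref{THM: main2}.

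The final step is to determine the constants $c_i$. I would perform the partial-fraction decomposition of $G(x)$ (or, equivalently, solve the $4\times 4$ Vandermonde-type system coming from the initial values $f(1)=1$, $f(2)=1$, $f(3)=2$, $f(4)=4$) so as to express $f(n)=\sum_{i=1}^4 c_i\lambda_i^{n-1}$. This last step is the main obstacle: although entirely routine, the algebra is delicate, and I would exploit the identities $\lambda_1+\lambda_2=\lambda_3+\lambda_4=1$, $\lambda_1\lambda_2=-(1+\sqrt 5)/2$, $\lambda_3\lambda_4=(\sqrt 5-1)/2$, together with $\lambda_2-\lambda_1=\sqrt{2\sqrt 5+3}$ and $\lambda_4-\lambda_3=i\sqrt{2\sqrt 5-3}$, to massage the result into the announced closed form for $c_1,c_2,c_3,c_4$.
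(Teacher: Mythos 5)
Your proposal is correct and lands on the same recurrence, the same characteristic polynomial $\lambda^4-2\lambda^3+\lambda-1$, and the same four roots as the paper, but you package the counting differently. The paper conditions directly on the first symmetric segment: writing $a_n=b_n+c_n$ according to whether $h_1=0$, it gets $b_n=a_{n-1}$ and $c_n=\sum_{p=1}^{\lfloor (n-1)/2\rfloor}\sum_{i=1}^{n-2p}a_{n-2p-i}$, and then eliminates the double sum by telescoping the partial sums $S_n$ with a separate parity check for $n$ odd and even. You instead encode the whole block decomposition into the rational generating function $G(x)=\bigl(1-x-B(x)\bigr)^{-1}$ with $B(x)=x^3/\bigl((1-x)^2(1+x)\bigr)$ and read the fourth-order recurrence off the denominator $1-2x+x^3-x^4$; this is a cleaner derivation that avoids the parity case split, though it does lean on the uniqueness of the decomposition into single zeros and nontrivial blocks (which you assert rather than prove, but which follows by scanning left to right, since a block beginning with $p>0$ is forced up to its terminal $-p$). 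Your justification that every nontrivial block needs at least one interior zero (else $h_i-h_{i+1}=2$ at the jump from $1$ to $-1$, violating monotonicity of $R$) matches the paper's explicit stipulation, and your factorization trick $P(\lambda)=(\lambda^2-\lambda)^2-(\lambda^2-\lambda)-1$ is actually more informative than the paper, which merely states the roots. Both you and the paper leave the final determination of $c_1,\dots,c_4$ from the initial values $1,1,2,4$ as an unexecuted but routine Vandermonde computation; note that the constants in the theorem statement are the paper's $c_i\lambda_i$, so your normalization $f(n)=\sum_{i=1}^4 c_i\lambda_i^{n-1}$ is the correct one to aim for.
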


The rest of the article is arranged as follows. We give the proof of Theorem~\ref{THM: main1} in Section 2, and in Section 3, we prove Theorem~\ref{THM: main2}.

\section{Proof of Theorem~\ref{THM: main1}}
From (\ref{EQ: e1}) and the Landau inequalities, we have the following fact.
\begin{fact}\label{FACT: Landau}
Let $R=(r_1,r_2,\ldots,r_n)$ be a feasible score sequence and  $H_R=(h_1,h_2,\ldots,h_n)$ be its normalized score sequence. Then $h_i-h_{i+1}\le 1$ for $i=1,2,\ldots,n-1$, $h_1\ge 0$, $h_n\le 0$, $\sum\limits_{i=1}^{k}{h_i}\ge 0$ for  $k=1,2,\dots,n-1$, and $\sum\limits_{i=1}^{n}{h_i}=0$.
\end{fact}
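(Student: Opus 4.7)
\textbf{Proof proposal for Fact~\ref{FACT: Landau}.}

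Each of the five assertions is a restatement of the feasibility conditions on $R$ in the shifted coordinates $h_i = r_i - (i-1)$, so the plan is to substitute this definition into each claim and read off the conclusion from either the nondecreasing hypothesis on $R$, Landau's inequalities, or the sum identity (\ref{EQ: e1}). The bounded-difference inequality $h_i - h_{i+1} \le 1$ reduces immediately to $r_i - r_{i+1} \le 0$, which is the nondecreasing hypothesis. The initial bound $h_1 \ge 0$ is exactly Landau at $k=1$, since $\binom{1}{2}=0$. For each $k \in \{1,\dots,n-1\}$, the identity $\sum_{i=1}^{k} h_i = \sum_{i=1}^{k} r_i - \binom{k}{2}$ converts Landau at level $k$ directly into $\sum_{i=1}^{k} h_i \ge 0$; taking $k=n$ and invoking (\ref{EQ: e1}) yields the final equality $\sum_{i=1}^{n} h_i = 0$.

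The one step that requires combining two feasibility conditions rather than a single substitution is $h_n \le 0$. Here I would subtract the Landau inequality at $k=n-1$ from the sum identity (\ref{EQ: e1}) to obtain
\[
r_n \;=\; \binom{n}{2} - \sum_{i=1}^{n-1} r_i \;\le\; \binom{n}{2} - \binom{n-1}{2} \;=\; n-1,
\]
from which $h_n = r_n - (n-1) \le 0$. This is the only place in the proof where two feasibility conditions interact; everywhere else the argument is pure bookkeeping.

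Since the fact is a compact reformulation of Landau's inequalities together with (\ref{EQ: e1}) in the $H_R$-coordinates, there is no substantive obstacle. The point of isolating it as a fact is simply that the inequalities $h_i - h_{i+1} \le 1$, $h_1 \ge 0$, $h_n \le 0$, and the partial-sum bounds $\sum_{i=1}^{k} h_i \ge 0$ with equality at $k=n$ are the natural hypotheses under which the symmetric-segment structure of Theorem~\ref{THM: main1} will be proved; the difficulty of the paper lies in the later arguments that take the fact as input.
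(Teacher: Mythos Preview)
Your proposal is correct and matches the paper's treatment: the paper simply records Fact~\ref{FACT: Landau} as an immediate consequence of (\ref{EQ: e1}) and the Landau inequalities without writing out any details, and your substitution argument is exactly the routine verification that makes this immediate. There is nothing to compare---you have just spelled out what the paper leaves implicit.
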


\begin{proof}[Proof of Theorem~\ref{THM: main1}]
	\emph{Sufficiency:}  Let $X$ and $Y$ be the multisets determined by the normalized score vector $H_R$. From Theorem~\ref{thm1}, it is sufficient to show that $|P_R|=1$, i. e. to show all the ordered pairs $(x,y)\in X\times Y$ in a feasible $\ell$-tuple are determined uniquely.
Let $M$ be a feasible $\ell$-tuple of $P_R$. Without loss of generality, assume $H_1=(h_{i_1},\dots,h_{i_p},h_{k_1},\ldots,h_{k_r},h_{j_1},\ldots,h_{j_p})$, where $h_{i_s}=p-s+1$, $h_{j_s}=-s$ for $s=1,2,\ldots, p$  and $h_{k_1}=\ldots=h_{k_r}=0$.
Since $h_{j_p}=-p$, $j_p$ occurs in $p$ distinct ordered pairs of the form $(i,j_p)$ with $i\in X$ and $i<j_p$.
Since there are exactly $p$ distinct elements $i_1, \ldots, i_p$ less than $j_p$ in $X$, the $p$ ordered pairs in $M$ containing $j_p$ have to be $(i_1,j_p),\ldots,(i_p,j_p)$. Because $h_{i_p}=1$ and $i_p$ occurs in $(i_p,j_p)$, $(i_p,j_{p-1})$ can not belong in $M$.
And since $h_{j_{p-1}}=-(p-1)$, $j_{p-1}$ has to occur in $(i_1,j_{p-1}),\ldots,(i_{p-1},j_{p-1})$ with a same reason. Because $h_{i_{p-1}}=2$ and $(i_{p-1}, j_p), (i_{p-1}, j_{p-1})\in M$, $(i_{p-1}, j_{p-2})$ can not occur in $M$.
And since $h_{j_{p-2}}=-(p-2)$, $j_{p-2}$ has to occur in $(i_1,j_{p-2}),\ldots,(i_{p-2},j_{p-2})$. Continue this procedure, we have $j_1$ has to occur in $(i_1, j_1)$.  Therefore, all the ordered pairs in $M$ with entries being indices in $H_1$ are determined uniquely, i.e.  these ordered pairs are independent with the pairs with entries being indices out of $H_1$. So with similar discussion on the ordered pairs in $M$ with entries being indices in $H_2$, we have that all such ordered pairs in $M$ are determined uniquely too. Continue the same discussion on $H_3, \ldots, H_m$ one by one, we have all ordered pairs in $M$ are determined uniquely. This completes the proof of the sufficiency.

\emph{Necessity:}
Let $H$ be the normalized score vector of $R$ and $H_1$ be the first maximal segment starting with a positive string  and ending in a {nonpositive string with a negative end of $H$}, i.e.
$H_1=(p,\ast,\ldots, \ast, 1, 0,\ldots,0,-1, \ast, \ldots,\ast, -r)$, where the predecessor of $p$ is zero (if any) and there is no negative entries between  $-r$ and the second positive string (if any) by the maximality of $H_1$. Let $-q$ be the minimum entry of the nonpositive string $(-1, \ast, \ldots,\ast, -r)$ and
assume that $h_{k_1}=-1$ (resp. $h_{k_t}=-r$) be the first (resp. last) negative entry and $h_j=-q$ be the first $-q$. Then $k_1\le j\le k_t$. Since $|U_{\min}(R)|=1$, $P_R$ consists of precisely one feasible  $\ell$-tuple, say $M$. Since $h_j=-q$, there are at least $q$ positive entries with subscripts less than $k_1$ in $H$.
\begin{claim}\label{CLAIM:c1}
	There are exactly $q$ positive entries $h_{i_1},\ldots,h_{i_q}$ in $H$ satisfying that  $i_1<\ldots<i_q<k_1$.
\end{claim}
\begin{proof}[Proof of Claim~\ref{CLAIM:c1}]
If not, assume that $j$ occurs in $(i_1,j),\ldots,(i_q,j)$ and there exists an $i_{q+1}<k_1$ such that $h_{i_{q+1}}>0$ and $i_{q+1}$ occurs in $(i_{q+1},j')$ for some $j'\ne j$. Then $k_1\le j'$ and $h_{j'}<0$.
If there exists $i_k$, $k\in \{1,\ldots,q\}$, with $h_{i_k}=1$ then $i_k$ can not occur in any other ordered pairs in $M$. So $M'=(M\setminus\{(i_k,j), (i_{q+1}, j')\})\cup\{(i_{q+1}, j), (i_k, j')\}$ is another feasible $\ell$-tuple of $P_R$, a contradiction to $|P_R|=1$.


 Now assume $h_i>1$ for all $i\in\{i_1,\ldots, i_q\}$ and let $h_{i_0}$ be the last positive entry of $H_1$. Then $h_{i_0}=1$ and $i_0<k_1$. Assume $i_0$ occurs in $(i_0,j'')\in M$.
	
If $k_1\le j''\le k_t$ then $h_{j''}\ge -q$. Since $(i_0,j'')\in M$ and $j''$ occurs precisely $-h_{j''}\le q$ times in $M$, there is at least one  $i\in\{i_1,\ldots,i_q\}$ such that $(i,j'')\notin M$. So $(M\setminus\{(i,j), (i_{0}, j'')\})\cup\{(i_{0}, j), (i, j'')\}$ is a new $\ell$-tuple of $P_R$, a contradiction.

Now assume $j''>k_t$.  Then $h_{j''}<0$. By the maximality of $H_1$ and Fact~\ref{FACT: Landau}, $H$ contains a positive string between $h_{k_t}$ and ${h_{j''}}$, and so  there is $j_0$ such that $k_t<j_0<j''$ and $h_{j_0}=1$. Assume  $(j_0,j''')\in M$. Hence $j_0<j'''$.
So $(M\setminus\{(i_0,j''), (j_{0}, j''')\})\cup\{(j_{0}, j''), (i_0, j''')\}$ is a feasible $\ell$-tuple different from $M$, a contradiction.
\end{proof}

Furthermore, we have
\begin{claim}\label{CLAIM:c2}
	$h_{i_1},h_{i_2},\ldots,h_{i_q}$ are $q$ distinct positive integers.
\end{claim}
\begin{proof}[Proof of Claim~\ref{CLAIM:c2}] By Claim~\ref{CLAIM:c1}, we have $h_{i_q}=1$.
By Fact~\ref{FACT: Landau}, $\max\{h_{i_1},\ldots,h_{i_q}\}\le q$, and the equality holds if and only if $h_{i_1},\ldots,h_{i_q}$ are pairwise distinct.
So if $h_{i_1},\ldots,h_{i_q}$ are not pairwise distinct then
$\max\{h_{i_1},\ldots,h_{i_q}\}=p<q$. Hence
$$\sum_{i=i_1}^{i_q}h_i<\sum_{i=1}^{p}i+(p+1)+(p+2)+\dots+q=\sum_{i=1}^{q}i.$$
So
$$	\sum_{i=1}^{k_t}h_i=\sum_{i=i_1}^{i_q}h_i+\sum_{i=k_1}^{k_t}h_i< \sum_{i=1}^{q}i+\sum_{i=1}^{q}(-i)<0, $$
contradicts to the Landau conditions.
\end{proof}

By Claims~\ref{CLAIM:c1},~\ref{CLAIM:c2} and Fact~\ref{FACT: Landau}, $h_{k_1},\ldots, h_{k_t}$ are pairwise different and $(h_{k_1},\ldots, h_{k_t})=(-1,-2,\ldots,-q)$. So we have $H_1=(q, q-1,\ldots, 1,0,\ldots,0,-1,-2,\ldots,-q)$. With the similar discussion on the second maximal segment $H_2$ starting with a positive string  and ending in a nonpositive string with a negative end of $H$, we have $H_2$ is symmetric too. Continuing the same analysis, we get the conclusion.
\end{proof}

\begin{cor}\label{COR: c1}
Let $R=(0,1,\ldots,n-1)$. Then $|U_{\min}(R)|=1$.
\end{cor}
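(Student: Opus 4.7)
The plan is to apply Theorem~\ref{THM: main1} directly, so the main task reduces to computing the normalized score vector of $R=(0,1,\ldots,n-1)$ and observing that it already has the required symmetric form.

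First, I would compute $H_R$. Since $r_i=i-1$, we have $h_i=r_i-(i-1)=0$ for every $i=1,2,\ldots,n$, so
\begin{equation*}
H_R=(0,0,\ldots,0).
\end{equation*}
The paper defines a vector $H$ to be \emph{symmetric} if $H$ has the form $(0,\ldots,0)$ or $(p,p-1,\ldots,1,0,\ldots,0,-1,-2,\ldots,-p)$ for some $p\ge 1$. Thus $H_R$ is itself a single symmetric segment (take $m=1$, $H_1=H_R$ of the first form). Theorem~\ref{THM: main1} then yields $|U_{\min}(R)|=1$ immediately.

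As a sanity check, one can also argue via Theorem~\ref{thm1}: with $H_R=(0,\ldots,0)$, the multisets $X$ and $Y$ are empty, so $\ell=0$ and $P_R$ consists of the single empty $\ell$-tuple. Hence $|U_{\min}(R)|=|P_R|=1$, corresponding to the unique transitive tournament on $n$ vertices, which has no upsets.

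There is no real obstacle here; the whole content of the corollary is recognizing that $R=(0,1,\ldots,n-1)$ is precisely the case in which $H_R$ is the all-zero vector, which is the base (trivial) instance of a symmetric segment in the definition.
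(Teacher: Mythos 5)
Your proof is correct and matches the paper's argument exactly: compute $H_R=(0,0,\ldots,0)$, note that this is a symmetric vector by definition, and invoke Theorem~\ref{THM: main1}. The additional sanity check via Theorem~\ref{thm1} is a nice touch but not needed.
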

\begin{proof}
Clearly, $H_R=(0,0,\ldots, 0)$ is symmetric. So $|U_{\min}(R)|=1$ by Theorem~\ref{THM: main1}.
\end{proof}

The following examples  show us a score sequence $R$ with $|U_{\min}(R)|=1$ not included in Theorem~\ref{THM: Brau-Li83} and a score sequence $R$ with $|U_{\min}(R)|>1$.
\newtheorem{Example}{\bf{Example}}
\begin{Example}
Let $R=(2,2,2,2,2,5,6,7,9,9,9)$. Then $R$ is feasible and $H_R=(2,1,0,-1,-2,0,0,0,1,0,-1)$. From Theorem~\ref{THM: main1}, $|U_{\min}(R)|=1$ and $$P_R=\{\{(1,5), (2,5), (1,4), (9,11)\}\}.$$
So
	\begin{equation}
	A=\left(\begin{array}{ccccccccccc}
    0&0&0&1&1&0&0&0&0&0&0\\
	1&0&0&0&1&0&0&0&0&0&0\\
	1&1&0&0&0&0&0&0&0&0&0\\
	0&1&1&0&0&0&0&0&0&0&0\\
	0&0&1&1&0&0&0&0&0&0&0\\
	1&1&1&1&1&0&0&0&0&0&0\\
	1&1&1&1&1&1&0&0&0&0&0\\
	1&1&1&1&1&1&1&0&0&0&0\\
	1&1&1&1&1&1&1&1&0&0&1\\
	1&1&1&1&1&1&1&1&1&0&0\\
	1&1&1&1&1&1&1&1&0&1&0\\
	\end{array}\right) \notag
	\end{equation}
\end{Example}

\begin{Example}
 Let $R=(2,2,2,2,3,5,6,8,8,8,9)$. Then $R$ is feasible but $H_R=(2,1,0,-1,-1,0,0,1,0,-1,-1)$. Clearly, $H_R$ does not satisfy the requirement of Theorem~\ref{THM: main1}. So $|U_{\min}(R)|>0$. Note that $\ell=2+1+1=4$.
It can be checked that $P_R$ has six feasible 4-tuples and
\begin{multline*}
  P_R=\{\{(1,4), (1,5), (2,10),(8,11)\}, \{(1,4), (1,5), (2,11),  (8, 10)\}, \\
  \{(1,5), (2,4), (1,10), (8,11)\}, \{(1,5), (2,4), (1,11),(8,10)\},\\
 \{(2,5), (1,4), (1,10),(8,11)\}, \{(2,5), (1,4), (1,11), (8,10)\}\}.
\end{multline*}
So
	\begin{equation}
	A=\left(\begin{array}{ccccccccccc}0&0&0&1&1&0&0&0&0&0&0\\
	1&0&0&0&0&0&0&0&0&1&0\\
	1&1&0&0&0&0&0&0&0&0&0\\
	0&1&1&0&0&0&0&0&0&0&0\\
	0&1&1&1&0&0&0&0&0&0&0\\
	1&1&1&1&1&0&0&0&0&0&0\\
	1&1&1&1&1&1&0&0&0&0&0\\
	1&1&1&1&1&1&1&0&0&0&1\\
	1&1&1&1&1&1&1&1&0&0&0\\
	1&0&1&1&1&1&1&1&1&0&0\\
	1&1&1&1&1&1&1&0&1&1&0\\
	\end{array}\right), \notag
	\end{equation}
	
	\begin{equation}
	\left(\begin{array}{ccccccccccc}0&0&0&1&1&0&0&0&0&0&0\\
	1&0&0&0&0&0&0&0&0&0&1\\
	1&1&0&0&0&0&0&0&0&0&0\\
	0&1&1&0&0&0&0&0&0&0&0\\
	0&1&1&1&0&0&0&0&0&0&0\\
	1&1&1&1&1&0&0&0&0&0&0\\
	1&1&1&1&1&1&0&0&0&0&0\\
	1&1&1&1&1&1&1&0&0&1&0\\
	1&1&1&1&1&1&1&1&0&0&0\\
	1&1&1&1&1&1&1&0&1&0&0\\
	1&0&1&1&1&1&1&1&1&1&0\\
	\end{array}\right), \notag
	\end{equation}
	
	\begin{equation}
	\left(\begin{array}{ccccccccccc}0&0&0&0&1&0&0&0&0&1&0\\
	1&0&0&1&0&0&0&0&0&0&0\\
	1&1&0&0&0&0&0&0&0&0&0\\
	1&0&1&0&0&0&0&0&0&0&0\\
	0&1&1&1&0&0&0&0&0&0&0\\
	1&1&1&1&1&0&0&0&0&0&0\\
	1&1&1&1&1&1&0&0&0&0&0\\
	1&1&1&1&1&1&1&0&0&0&1\\
	1&1&1&1&1&1&1&1&0&0&0\\
	0&1&1&1&1&1&1&1&1&0&0\\
	1&1&1&1&1&1&1&0&1&1&0\\
	\end{array}\right), \notag
	\end{equation}
	
	\begin{equation}
	\left(\begin{array}{ccccccccccc}0&0&0&0&1&0&0&0&0&0&1\\
	1&0&0&1&0&0&0&0&0&0&0\\
	1&1&0&0&0&0&0&0&0&0&0\\
	1&0&1&0&0&0&0&0&0&0&0\\
	0&1&1&1&0&0&0&0&0&0&0\\
	1&1&1&1&1&0&0&0&0&0&0\\
	1&1&1&1&1&1&0&0&0&0&0\\
	1&1&1&1&1&1&1&0&0&1&0\\
	1&1&1&1&1&1&1&1&0&0&0\\
	1&1&1&1&1&1&1&0&1&0&0\\
	0&1&1&1&1&1&1&1&1&1&0\\
	\end{array}\right), \notag
	\end{equation}
	
	\begin{equation}
	\left(\begin{array}{ccccccccccc}0&0&0&1&0&0&0&0&0&1&0\\
	1&0&0&0&1&0&0&0&0&0&0\\
	1&1&0&0&0&0&0&0&0&0&0\\
	0&1&1&0&0&0&0&0&0&0&0\\
	1&0&1&1&0&0&0&0&0&0&0\\
	1&1&1&1&1&0&0&0&0&0&0\\
	1&1&1&1&1&1&0&0&0&0&0\\
	1&1&1&1&1&1&1&0&0&0&1\\
	1&1&1&1&1&1&1&1&0&0&0\\
	0&1&1&1&1&1&1&1&1&0&0\\
	1&1&1&1&1&1&1&0&1&1&0\\
	\end{array}\right), \notag
	\end{equation}
	or
	\begin{equation}
	\left(\begin{array}{ccccccccccc}0&0&0&1&0&0&0&0&0&0&1\\
	1&0&0&0&1&0&0&0&0&0&0\\
	1&1&0&0&0&0&0&0&0&0&0\\
	0&1&1&0&0&0&0&0&0&0&0\\
	1&0&1&1&0&0&0&0&0&0&0\\
	1&1&1&1&1&0&0&0&0&0&0\\
	1&1&1&1&1&1&0&0&0&0&0\\
	1&1&1&1&1&1&1&0&0&1&0\\
	1&1&1&1&1&1&1&1&0&0&0\\
	1&1&1&1&1&1&1&0&1&0&0\\
	0&1&1&1&1&1&1&1&1&1&0\\
	\end{array}\right) \notag
	\end{equation}
\end{Example}

\section{Proof of Theorem~\ref{THM: main2}}
\begin{proof}[Proof of Theorem~\ref{THM: main2}]
Let $a_n$ be the number of the feasible score sequence $R=(r_1,r_2,\ldots,r_n)$ with the property that $|U_{\min}(R)|=1$.
Let $H_R=(h_1,h_2,\ldots,h_n)$ be the normalized score vector of $R$. Then $R$ and $H_R$ have a one to one correspondence. Let $b_n$ be the number of the score sequences with $h_1=0$ and $c_n$ be the ones with $h_1\ne 0$. Then $a_n=b_n+c_n$. By Theorem~\ref{THM: main1}, $H_R=(H_1,H_2,\ldots,H_m)$ and every $H_i$ is a symmetric vector for some positive integer $p_i$, $i=1,2,\ldots, m$.
If $h_1=0$ then  $H'=(h_2, \ldots, h_n)$ corresponds to a score sequence $R'=(r_2,\ldots, r_n)$ with $|U_{\min}(R')|=1$.  Thus we have $b_n=a_{n-1}$ for $n\ge 2$ and we define $a_0=1$. To calculate $c_n$, assume that $h_1=p>0$. Then $H_1=(p,p-1, \ldots, 1, 0,\ldots,0,-1,-2,\ldots, -p)$, where the number of zeros between 1 and $-1$ is at least 1 and at most $n-2p$. So $2p+1\le n$. Therefore, the recursion relation of $c_n$ is
\begin{equation}
c_n=\sum_{p=1}^{\lfloor\frac{n-1}{2}\rfloor}\sum_{i=1}^{n-2p}a_{n-2p-i}. \notag
\end{equation}
So we have
\begin{equation}
a_n=a_{n-1}+\sum_{p=1}^{\lfloor\frac{n-1}{2}\rfloor}\sum_{i=1}^{n-2p}a_{n-2p-i}.
\end{equation}
Let
\begin{equation}
S_n=\sum_{p=1}^{\lfloor\frac{n-1}{2}\rfloor}\sum_{i=1}^{n-2p}a_{n-2p-i}. \notag
\end{equation}
Then $a_n=a_{n-1}+S_n$.
When $n$ is odd, it can be  directly checked  that
\begin{equation}
S_{n+1}-S_n=\sum_{p=1}^{\frac{n-1}{2}}a_{n-2p} \notag
\end{equation}
and
\begin{equation}
S_{n+3}-S_{n+2}=\sum_{p=1}^{\frac{n+1}{2}}a_{n+2-2p}. \notag
\end{equation}
So we have
\begin{equation}
S_{n+3}-S_{n+2}-(S_{n+1}-S_n)=a_n. \notag
\end{equation}
When $n$ is even, we similarly have
\begin{equation}
S_{n+3}-S_{n+2}-(S_{n+1}-S_{n})=a_{n}. \notag
\end{equation}
Note that
\begin{equation}
a_{n+1}-a_n=a_n-a_{n-1}+S_{n+1}-S_n \notag
\end{equation}
and
\begin{equation}
a_{n+3}-a_{n+2}=a_{n+2}-a_{n+1}+S_{n+3}-S_{n+2}. \notag
\end{equation}
So
\begin{equation}\label{EQ: e2}
a_{n+3}-a_{n+2}-(a_{n+1}-a_n)=a_{n+2}-a_{n+1}-(a_n-a_{n-1})+a_n. \notag
\end{equation}
Therefore, the recursion relation of the sequence $\{a_n\}$ is
\begin{equation}
a_{n+3}-2a_{n+2}+a_n-a_{n-1}=0.
\end{equation}
Solve the characteristic equation
\begin{equation}
x^4-2x^3+x-1=0, \notag
\end{equation}
we have
$\lambda_1=\frac{1-\sqrt{2\sqrt{5}+3}}2$, $\lambda_2=\frac{1+\sqrt{2\sqrt{5}+3}}2$, $\lambda_3=\frac{1-i\sqrt{2\sqrt{5}-3}}2$,  $\lambda_4=\frac{1+i\sqrt{2\sqrt{5}-3}}2$.
So the general formula of $a_n$ is
\begin{equation}
a_n=c_1\lambda_1^{n}+c_2\lambda_2^{n}+c_3\lambda_3^{n}+c_4\lambda_4^{n}. \notag
\end{equation}
It can be easily checked that the original values
\begin{equation}
a_1=1,a_2=1,a_3=2, \mbox{ and } a_4=4.\notag
\end{equation}
So we have
\begin{equation}\label{EQ: e3}
\left\{
\begin{array}{r}
  c_1\lambda_1+c_2\lambda_2+c_3\lambda_3+c_4\lambda_4 = 1 \\
  c_1\lambda_1^2+c_2\lambda_2^2+c_3\lambda_3^2+c_4\lambda_4^2 = 1  \\
  c_1\lambda_1^3+c_2\lambda_2^3+c_3\lambda_3^3+c_4\lambda_4^3 = 2\\
  c_1\lambda_1^4+c_2\lambda_2^4+c_3\lambda_3^4+c_4\lambda_4^4 = 4
\end{array}
\right.
\end{equation}
or equivalently,
\begin{equation}
\left(\begin{array}{cccc}
1&1&1&1\\
\lambda_1&\lambda_2&\lambda_3&\lambda_4\\
\lambda_1^2&\lambda_2^2&\lambda_3^2&\lambda_4^2\\
\lambda_1^3&\lambda_2^3&\lambda_3^3&\lambda_4^3\\
\end{array}\right)\left(\begin{array}{c}
c_1\lambda_1\\
c_2\lambda_2\\
c_3\lambda_3\\
c_4\lambda_4\\
\end{array}\right)=\left(\begin{array}{c}
1\\
1\\
2\\
4\\
\end{array}\right). \notag
\end{equation}
Solve the system of linear equations, we have
\begin{multline*}
 c_1\lambda_1=\frac{\sqrt{5}+1}4\left(\frac{1}{\sqrt{5}}-\frac{1}{\sqrt{2\sqrt{5}+3}}\right), c_2\lambda_2=\frac{\sqrt{5}+1}4\left(\frac{1}{\sqrt{5}}+\frac{1}{\sqrt{2\sqrt{5}+3}}\right), \\
 c_3\lambda_3=\frac{\sqrt{5}-1}4\left(\frac{1}{\sqrt{5}}-\frac{1}{\sqrt{2\sqrt{5}-3}}i\right), \mbox{ and  } c_4\lambda_4=\frac{\sqrt{5}-1}4\left(\frac{1}{\sqrt{5}}+\frac{1}{\sqrt{2\sqrt{5}-3}}i\right).
\end{multline*}
\end{proof}

\end{document}